\theoremstyle{plain}
\newtheorem*{acknowledgements}{Acknowledgements}
\newcommand{\R}{  \mathbb{R}   }
\newcommand{\eps}{\varepsilon}
\newcommand{\e}{  \text{e}   }
\newcommand{\Z}{  \mathbb{Z}   }
\newcommand{\N}{  \mathbb{N}   }
\renewcommand{\H}{  \mathcal{H}   }
\newcommand{\T}{  \mathbb{T}   }
\newcommand{\p}{  \partial   }
\newcommand{\dis}{  \displaystyle   }
\renewcommand{\a}{  \alpha   }
\renewcommand{\p}{  \bf p  }
\newcommand{\s}{  \sigma   }
\renewcommand{\>}{  \rangle   }
\renewcommand{\phi}{  \varphi   }
\numberwithin{equation}{section}
\author{ Nicolas Burq}
\address{Laboratoire de Math\'ematiques, UMR 8628 du CNRS, B\^at. 425,
Universit\'e Paris Sud, 91405 Orsay Cedex, France and Ecole Normale Sup\'erieure, 45, rue d'Ulm, 75005 Paris,  Cedex 05,  France, UMR 8553 du CNRS }
\email{nicolas.burq@math.u-psud.fr}
\author{ Laurent Thomann }
\address{Laboratoire de Math\'ematiques J. Leray, UMR  6629 du CNRS, Universit\'e de Nantes, 
2, rue de la Houssini\`ere,
44322 Nantes Cedex 03, France}
\email{laurent.thomann@univ-nantes.fr}
\author{ Nikolay Tzvetkov}
\address{University of Cergy-Pontoise, UMR CNRS 8088, Cergy-Pontoise, F-95000}
\email{nikolay.tzvetkov@u-cergy.fr}
\title[Global infinite energy solutions  for the   cubic wave equation ]{Global infinite energy solutions  for the   cubic wave equation} 
\begin{document}
\frontmatter
 \begin{abstract}
 We prove the existence of infinite energy global solutions of the cubic wave equation in dimension greater than $3$.
 The data is a typical element on the support of suitable probability measures.
  \end{abstract}
\subjclass{35BXX ;  37K05 ; 37L50}
\keywords{Nonlinear wave equation,  random data,  weak solutions, global solutions}
\thanks{L.T. was partly  supported by the  grant ANR-10-JCJC 0109   and N.T. by an  ERC grant.}
\maketitle
\mainmatter

%%%%%%%%%%%%%%%%%%%%%%%%
 \section{Introduction}
This paper is a higher dimensional sequel of the recent article \cite{BTproba} by the first and the third authors (and also of \cite{BT2, BT3,BL}).
As such it aims to construct global in time solutions of the cubic wave equation with low regularity (infinite energy) random initial data. To the best of our knowledge such a regularity is out of reach of the present deterministic methods. The major difference between the present paper and \cite{BTproba} is that here we only establish existence results and in particular no uniqueness statement is proven. Let us recall that in \cite{BTproba} a suitable uniqueness and a probabilistic continuity of the flow were proven. This result was followed by more recent results by Nahmod-Pavlovic-Staffilani~\cite{NPS} on the $2$ and $3$-dimensional homogeneous Navier-Stokes equation, where the authors obtain strong (in $2$-d) and weak (in $3$-d) results, and in turn, here we are inspired by this latter $3$-d weak-existence result. 
Related weak-existence results  had been already used in the context of the randomly forced Navier-Stokes equation by Da Prato-Debussche~ \cite{DPD} and the Euler equation by Albeverio-Cruzeiro~\cite{AC}, using more sophisticated probabilistic tools (Prokhorov and Skorohod Theorems).
This approach may be seen as the analogue in the random setting of the Leray compactness method for constructing solutions of nonlinear evolution equations. 
It has the advantage to require less regularity on the initial data,  one allows infinite energy while the Leray method requires finite energy of the data.
It should however be emphasised that as in the Leray method our approach still makes a crucial use of the energy functional.  
In this paper we will only need an invariance property for the linear evolution combined with large deviation estimates on the nonlinear part which are much easier to achieve than the invariance properties as in \cite{DPD,AC}.
Let us now describe our model.
Let $d\geq 3$ and consider the cubic wave equation on the torus $\T^{d}=(\R/2\pi\Z)^{d}$
 \begin{equation}\label{Wv0}
\left\{
\begin{aligned}
&\partial^{2}_{t}u- {\mathbf{\Delta}} u+ u^{3}=0, \quad   (t,x)\in \R\times \T^{d},\\
&(u,\partial_{t}u)(0,\cdot)=  (u_{0},u_{1}) \in \H^{s},
\end{aligned}
\right.
\end{equation}
where ${\mathbf{\Delta}}:={\mathbf{\Delta}}_{\T^{d}}$ is the Laplace operator and 
$$\H^{s}=\H^{s}(\T^{d}):=H^{s}(\T^{d})\times  H^{s-1}(\T^{d}).$$
Denote by $s_{c}=(d-2)/2$ the critical (scaling) Sobolev index for \eqref{Wv0}. Then one can show that \eqref{Wv0} is well-posed in $\H^{s}$ for $s>s_{c}$ (\cite{GiVe}) and ill-posed when $s<s_{c}$ (\cite{GiVe,CCT,L}). See the introduction of \cite{BTproba} for more details.
The energy of \eqref{Wv0} reads 
\begin{equation*}
\mathcal{E}(u)=\frac12\int_{\T^{d}}\big(|\nabla u|^{2}+(\partial_{t} u)^{2}\big)+\frac14\int_{\T^{d}}u^{4},
\end{equation*}
thus with deterministic compactness methods due to Leray (see {\it e.g.} Lebeau \cite[Section 6]{L} for the application of the method in the context of \eqref{Wv0}), 
we can construct   global weak solutions to \eqref{Wv0} so that 
\begin{equation*}
\big(u,\partial_{t}u\big)\in \mathcal{C}_{w}\big(\R; H^{1}(\T^{d})\cap L^{4}(\T^{d})\big)\times \mathcal{C}_{w}\big(\R ;  L^{2}(\T^{d})\big),
\end{equation*}
(here $\mathcal{C}_{w}$ means weak continuity in time) and $\mathcal{E}(u)(t)\leq \mathcal{E}(u)(0)$ for all $t\in \R$.
Observe that for $d>4$ one has $1<s_{c}$  and thus the construction of weak solutions works for data of supercritical regularity with respect to the scaling of the equation.
However it requires finite energy of the initial data. The main goal of this paper is to show that weak solutions still exist for infinite energy, almost surely with respect to a large class of probability measures.

Let us now describe precisely the initial data sets (statistical ensembles) that we shall consider in this article. Here we follow \cite{BTproba}. Let $0<s<1$ and let $(u_{0},u_{1})\in \H^{s}$ with Fourier series
\begin{equation*} 
u_{j}(x)=a_{j}+\sum_{n\in \Z_{\star}^{d}}\big(b_{n,j}\cos(n\cdot x)+c_{n,j}\sin(n\cdot x)\big),\quad j=0,1,
\end{equation*}
where $\Z_{\star}^{d}=\Z^{d}\backslash\{0\}$. Then let $\big(\alpha_{j}(\omega),\beta_{n,j}(\omega),\gamma_{n,j}(\omega)\big)$, $n\in \Z_{\star}^{d}$, $j=0,1$ be a sequence of independent real random variables given on a probability space $(\Omega,\mathcal{F},\p)$ with a joint distribution $\theta$ satisfying 
\begin{equation*}
\exists\,c>0,\quad \forall\,\gamma\in \R,\quad \int_{-\infty}^{\infty}\e^{\gamma x}\text{d}\theta(x)\leq \e^{c \gamma^{2}}.
\end{equation*}
We then  define the random variables $u_{j}^{\omega}$ by
\begin{equation*}
u_{j}^{\omega}(x)=\alpha_{j}(\omega)a_{j}+\sum_{n\in \Z_{\star}^{d}}\big(\beta_{n,j}(\omega)b_{n,j}\cos(n\cdot x)+\gamma_{n,j}(\omega)c_{n,j}\sin(n\cdot x)\big),
\end{equation*}
and we define the measure $\mu_{(u_{0},u_{1})}$ on $\H^{s}$ as the image of $\p$ under the map 
\begin{equation*}
\dis  \omega\longmapsto (u^{\omega}_{0},u^{\omega}_{1})\in \H^{s}.
 \end{equation*}
We then define  $\mathcal{M}^{s}$ by  
\begin{equation*}
\mathcal{M}^{s}=\bigcup_{(u_{0},u_{1})\in \H^{s}}\big\{\mu_{(u_{0},u_{1})}\big\}.
\end{equation*}
Denote by
  \begin{equation}\label{linear}
\dis S(t)(u_{0},u_{1})=\cos\big(\,t\sqrt{-{\mathbf{\Delta}}}\,\big)(u_{0})+\frac{\sin\big(\,t\sqrt{-{\mathbf{\Delta}}}\,\big)}{\sqrt{-{\mathbf{\Delta}}}}(u_{1}),
\end{equation}
the free wave  evolution. Then our result reads 
%%%%%%%%
 \begin{theo}\label{theo1} 
Let $0<s<1$ and $\mu\in \mathcal{M}^{s}$.  Then there exists a set $\Sigma$ 
of full $\mu$ measure   so that for every $(u_{0},u_{1})\in \Sigma\subset \H^{s}$ the
equation \eqref{Wv0} with 
initial condition $(u(0),\partial_{t}u(0))=(u_{0},u_{1})$ has a   solution 
\begin{equation*} 
u(t)=S(t)(u_{0},u_{1})+w(t)  ,
\end{equation*}
where for any $\eps>0$
\begin{equation*}
\big(w,\partial_{t}w\big)\in \mathcal{C}\big(\R; H^{1-\eps}(\T^{d})\times H^{-\eps}(\T^{d}) \big).
\end{equation*}
Moreover, for all $t\in \R$
 \begin{equation*}\begin{gathered}
 \|(w(t), \partial_t w(t)) \|_{\mathcal{H}^1(\T^{d})} \leq C (M+ |t|)^{\frac {1-s} s + \varepsilon} ,\\
\|w(t)\|_{L^4( \T^d)}\leq C (M+ |t|)^{\frac {1-s} {2s} + \varepsilon},
\end{gathered}
\end{equation*} 
with 
$ \mu (M>\lambda) \leq C e^{-\lambda^\delta}$ for some $\delta>0$.

\end{theo}
%%%%
\begin{rema}
Let us recall (see \cite{BTproba}) that if the  measure $\mu \in \mathcal{M}^s$ is constructed using data $(u_0,u_1) \in \mathcal{H}^s (\mathbb{T}^d)$, then $\mu( \mathcal{H}^s)=1$, while if for some $s<\sigma$, we have $(u_0,u_1) \notin \mathcal{H}^\sigma (\mathbb{T}^d)$, then as soon as the random variables $(\alpha_j, \beta_{n,j}, \gamma_{n,j})$ do not accumulate at $0$ (for example, in the case where they are non trivial and identically distributed, then  $\mu( \mathcal{H}^{\s})=0$. On the other hand, under rather weak assumptions, $\mu(B^{s})>0$ for any non empty open  ball $B^{s}\subset \H^{s}$ (see  \cite[Proposition 1.2]{BTproba}).
\end{rema}
Let us now mention two possible extensions of our result.
In the case $d=4$ one may expect to get uniqueness by combining the analysis of \cite{BTproba} with the critical $H^1$ theory for \eqref{Wv0}. 
One may also expect to include the case $s=0$ by elaborating on the arguments developed in \cite{BTproba} to treat this case. It is not clear to us what 
happens for $s<0$ (and in \cite{BTproba} as well). In particular we do not know whether  $s=0$ is the optimal regularity one may achieve by our approach.
Invariant Gibbs measures for dispersive equations were extensively studied (see {\it e.g.} \cite{Zhidkov,Bourgain1,Bourgain2,Tzvetkov1,Tzvetkov2,Oh1,Oh2,BTT} ).
In these papers the Gibbs measure is combined with a suitable local in time result (which can sometimes be quite involved) 
to get global existence and uniqueness on the support of the measure.
By an extension of the method (using in particular Skorohod and Prokhorov theorems) we use in this paper one may construct a dynamics (without any uniqueness) on the support of a Gibbs measure and prove its invariance. We plan to give several relevant examples of this observation in \cite{BTT3}. 
We however do not see how to make work such an approach in the context of \eqref{Wv0}. Indeed, the present methods of renormalization of Gibbs measures are restricted to dimensions $\leq 2$ (see~\cite{Bourgain1}).  
Let us also recall that as mentioned above a global existence based on Gibbs measures only works for a very specific choice of the initial distribution. 
On the other hand, it has of course the advantage to give a quite remarkable dynamical property of the flow.

The rest of the paper is organised as follows.
In Section~\ref{Sect.3} we recall stochastic properties of the linear flow which were proven in \cite{BTproba}. 
In Section \ref{Sect.4} we study the dynamics of an approximation of \eqref{Wv0}.  Section \ref{Sect.5} is devoted to the proof of Theorem \ref{theo1}. 
\begin{acknowledgements}
We thank Arnaud Debussche for discussions and for pointing out the reference \cite{DPD}. 
The second author is very grateful to Philippe Carmona for many clarifications on measures.
\end{acknowledgements}
%%%%%%%%%%%%%%%%%%%%%%%%%%%%%%%%%%%%%%%%%%%%%%%%%%%%%%%%% 
  \section{Stochastic estimates on the linear flow}\label{Sect.3}
 Once for all we fix $0<s<1$ and $\mu=\mu_{(u_{0},u_{1})} \in \mathcal{M}^{s}$. 
 Recall the definition \eqref{linear} of the linear wave propagator $S(t)$. 
 In this section we prove estimates which reflect the invariance of $\mu$ under $S(t)$.
 This is the only measure invariance aspect used in this paper.
  \subsection{The projectors}
 Denote by $\Z^{d}_{\star}=\Z^{d}\backslash\{0\}$. For a Fourier series $u$ 
 \begin{equation*}
 u(x)=a+\sum_{n\in \Z_{\star}^{d}}\big(b_{n}\cos(n\cdot x)+c_{n}\sin(n\cdot x)\big),
 \end{equation*}
 we denote by $\Pi_{0}(u)=a$ and for $N\geq 1$
 \begin{equation*}
 \Pi_{N}(u)=a+\sum_{1\leq |n|\leq N}\big(b_{n}\cos(n\cdot x)+c_{n}\sin(n\cdot x)\big) \quad \text{and}\quad \Pi^{N}=1-\Pi_{N}.
 \end{equation*}
 Let $\chi\in \mathcal{C}_{0}^{\infty}(-1,1)$, so that $\chi\equiv 1$ on $(-1/2,1/2)$. Let us also introduce   the smooth spectral projector
\begin{equation*} 
S_{N}(u)
\equiv \chi(-N^{-2}{\mathbf{\Delta}})
=a+\sum_{n\in \Z^{d}_{\star}}\chi\Big(\frac{|n|^2}{N^2}\Big)\big(b_{n}\cos(n\cdot x)+c_{n}\sin(n\cdot x)\big),
\end{equation*}
which will be needed in the next section. This operator has the following property (see {\it e.g.} \cite{BGT2} for a proof). 
\begin{lemm}\label{lem.sn}  Let $M$ be a compact Riemannian manifold. Let ${\mathbf{\Delta}}$ be the Laplace-Beltrami operator on $M$.  
Let $1\leq p\leq \infty$ and denote by $L^{p}=L^{p}(M)$. Then 
$S_{N}= \chi(-N^{-2}{\mathbf{\Delta}}): L^{p}\longrightarrow  L^{p}$ is continuous and there exists $C>0$ so that for every $N\geq 1$, 
\begin{equation*} 
\|S_{N}\|_{L^{p}\to L^{p}}\leq C.
\end{equation*}
Moreover, for all $f\in L^{p}$, $S_{N}f\longrightarrow f$ in $L^{p}$, when $N\longrightarrow +\infty$.
\end{lemm}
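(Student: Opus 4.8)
The plan is to reduce both assertions to a single uniform bound on the integral kernel of $S_N$. Since $S_N=\chi(-N^{-2}{\mathbf{\Delta}})$ is defined through the spectral theorem and $\chi$ is real, it is self-adjoint on $L^2(M)$ and preserves real-valuedness, hence has a real symmetric kernel $K_N(x,y)$ on $M\times M$; write $d=\dim M$ and let $\mathrm{dist}(\cdot,\cdot)$ be the Riemannian distance. The claim I would establish is that for every integer $A$ there is $C_A$, depending on $A$ and $M$ but not on $N\geq1$, with
\begin{equation*}
|K_N(x,y)|\leq C_A\,N^{d}\,\big(1+N\,\mathrm{dist}(x,y)\big)^{-A},\qquad x,y\in M,\quad N\geq1.
\end{equation*}
Granting this, take $A=d+1$: since geodesic balls of a compact manifold satisfy $\mathrm{vol}\,B(x,r)\lesssim r^{d}$, we get $\sup_{x}\int_M|K_N(x,y)|\,dy\lesssim\int_0^{\mathrm{diam}\,M}N^{d}(1+Nr)^{-d-1}r^{d-1}\,dr\leq C$ uniformly in $N$, and the same bound holds with $x$ and $y$ exchanged by symmetry. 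Schur's lemma then gives $\|S_N\|_{L^1\to L^1}+\|S_N\|_{L^\infty\to L^\infty}\leq C$, and by interpolation $\|S_N\|_{L^p\to L^p}\leq C$ for all $1\leq p\leq\infty$.

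To prove the kernel bound I would use the finite propagation speed of the wave equation on $M$. Set $m(\lambda):=\chi(\lambda^{2})$, an even Schwartz function; the spectral theorem and Fourier inversion give
\begin{equation*}
S_N=m\Big(\tfrac1N\sqrt{-{\mathbf{\Delta}}}\,\Big)=\frac{N}{2\pi}\int_{\R}\widehat m(N\tau)\,\cos\big(\tau\sqrt{-{\mathbf{\Delta}}}\,\big)\,d\tau .
\end{equation*}
I would split this integral with a smooth cutoff at $|\tau|=\tau_0$, where $\tau_0$ is a fixed length below the injectivity radius of $M$. On $|\tau|\gtrsim\tau_0$ the rapid decay of $\widehat m$ leaves a spectral multiplier of $\sqrt{-{\mathbf{\Delta}}}$ that is rapidly decreasing with all seminorms $O(N^{-\infty})$, so the corresponding part of $K_N$ is $O(N^{-\infty})$ uniformly on $M\times M$, using the uniform summability of $\sum_k(1+\lambda_k)^{-L}|\varphi_k(x)\varphi_k(y)|$ for $L$ large (Weyl's law). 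On $|\tau|\lesssim\tau_0$, finite propagation speed confines the kernel of $\cos(\tau\sqrt{-{\mathbf{\Delta}}})$ to $\{\,\mathrm{dist}(x,y)\leq|\tau|\,\}$, so in a fixed coordinate chart one inserts the Hadamard parametrix; performing the $\tau$-integration — which rebuilds $\chi(|\xi|^{2}/N^{2})$ for $|\xi|\lesssim N$, up to an $O(N^{-\infty})$ error for $|\xi|\gg N$ — leaves an oscillatory integral in $\xi$ over a ball of radius $\sim N$ with amplitude a symbol of order $0$, of size $\lesssim N^{d}$, and repeated integration by parts in $\xi$ (each step gaining $(1+N\,\mathrm{dist}(x,y))^{-1}$) produces the off-diagonal decay. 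This is the standard spectral-multiplier argument; I would recall its scheme and refer to \cite{BGT2} for the details.

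For the strong convergence, note that $\chi\equiv1$ near $0$ forces $m(0)=1$, so $S_N$ reproduces constants: $S_N\mathbf 1=\mathbf 1$, i.e.\ $\int_M K_N(x,y)\,dy=1$ for every $x$. Hence, for any Lipschitz $g$,
\begin{equation*}
\big(S_Ng-g\big)(x)=\int_M K_N(x,y)\,\big(g(y)-g(x)\big)\,dy ,
\end{equation*}
and the kernel bound with $A=d+1$ gives $\|S_Ng-g\|_{L^\infty}\leq\|g\|_{\mathrm{Lip}}\sup_{x}\int_M|K_N(x,y)|\,\mathrm{dist}(x,y)\,dy\lesssim\|g\|_{\mathrm{Lip}}\,N^{-1}\to0$; in particular $S_Ng\to g$ in every $L^p$. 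For general $f\in L^p$ with $1\leq p<\infty$ I would choose a Lipschitz $g$ with $\|f-g\|_{L^p}<\eps$ and combine with the uniform operator bound,
\begin{equation*}
\|S_Nf-f\|_{L^p}\leq\|S_N(f-g)\|_{L^p}+\|S_Ng-g\|_{L^p}+\|g-f\|_{L^p}\leq(C+1)\eps+\|S_Ng-g\|_{L^p},
\end{equation*}
so $\limsup_{N\to\infty}\|S_Nf-f\|_{L^p}\leq(C+1)\eps$ for every $\eps>0$, whence $S_Nf\to f$ in $L^p$; the case $p=\infty$ is identical with $f\in\mathcal C(M)$, Lipschitz functions being sup-norm dense there.

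The step I expect to be the main obstacle is the uniform kernel estimate, and inside it the small-time parametrix analysis of $\cos(\tau\sqrt{-{\mathbf{\Delta}}})$; everything else is soft. I note that on $\T^{d}$ — the only case actually used in this paper — the parametrix can be bypassed: there $K_N(x,y)=\sum_{n\in\Z^{d}}\chi(|n|^{2}/N^{2})\,e^{in\cdot(x-y)}$, and Poisson summation together with the rapid decay of the rescaled Fourier transform of $\xi\mapsto\chi(|\xi|^{2})$ on $\R^{d}$ yields $|K_N(x,y)|\lesssim_{A}N^{d}(1+N|x-y|)^{-A}$ directly, after which the argument runs verbatim.
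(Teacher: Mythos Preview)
The paper does not actually prove this lemma: it simply refers to \cite{BGT2} for a proof. Your sketch is precisely the standard argument one finds there---finite propagation speed for $\cos(\tau\sqrt{-{\mathbf{\Delta}}})$, Hadamard parametrix below the injectivity radius, Schur's test, then density for the strong convergence---so there is nothing to compare at the level of strategy. Your outline is correct, and the observation that on $\T^{d}$ Poisson summation gives the kernel bound directly is a nice shortcut for the only case the paper actually uses.

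One small caveat: the convergence assertion as stated in the lemma is literally false for $p=\infty$, since $S_Nf$ is smooth while a general $f\in L^{\infty}(M)$ need not lie in the sup-norm closure of smooth functions. You handle this by silently restricting to $f\in\mathcal{C}(M)$, which is the right fix; but this is a defect of the statement, not of your argument, and the paper only invokes the lemma for finite $p$ anyway.
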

 %%%
 \subsection{The estimates}
 Following \cite{BTproba},   we introduce the following sets for 
 $$\delta > 1/2, \;\widetilde{\delta} > 1/3, \;\check{\delta} > 0, \;\eps>0$$
\begin{eqnarray*}
F_{M}&=&\Big\{(u_{0},u_{1})\ :\;\;\|  \Pi_{M}(u_{0},u_{1})\|_{\H^{1}(\T^{d})} \leq M^{1-s+\eps}\Big\},\\
G_{M}&=&\Big\{(u_{0},u_{1}):\;\;\|  \Pi_{M}(u_{0})\|_{L^{4}(\T^{d})}\leq M^{\eps}\Big\},\\
H_{M}&=&\Big\{(u_{0},u_{1}):\;\;\|  \<t\>^{-\delta} S(t)(\Pi^{M}(u_{0},u_{1}))\|_{L^{2}(\R_{t};L^{\infty}(\T^{d}))}\leq M^{\eps-s}\Big\}\\
K_{M}&=&\Big\{(u_{0},u_{1}):\;\;\|  \<t\>^{-\widetilde{\delta}} S(t)(\Pi^{M}(u_{0},u_{1}))\|_{L^{3}(\R_{t};L^{6}(\T^{d}))}\leq M^{\eps-s}\Big\}\\
R_M &=&\Big\{(u_{0},u_{1})\ :\;\;\| \<t\>^{-\check{\delta}}  S(t)\Pi^{M}(u_{0},u_{1})\|_{L^\infty(\R; L^4( \T^{d}))} \leq M^{\eps-s}\Big\},
\end{eqnarray*}
and $E_M = F_M \cap G_M\cap H_M\cap K_M\cap R_M$.
Then the following result holds true.  
\begin{lemm} 
For any $\eps>0$, there exists $\eps_0>0$ such that there exist $C,c>0$ such that for every $M\geq 1$
\begin{gather*}
\mu(F^{c}_{M})\leq C\e^{-cM^{2\eps_0}},\quad  \mu(G^{c}_{M})\leq C\e^{-cM^{2\eps_0}},\\
 \mu(H^{c}_{M})\leq C\e^{-cM^{2\eps_0}},\quad  \mu(K^{c}_{M})\leq C\e^{-cM^{2\eps_0}}, \quad \mu(R^{c}_{M})\leq C\e^{-cM^{2\eps_0}}.
\end{gather*}
\end{lemm}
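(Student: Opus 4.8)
The statement collects five probabilistic tail estimates, one for each of the sets $F_M, G_M, H_M, K_M, R_M$. My plan is to treat each of them by the same scheme: reduce the relevant norm of the random series (or of its linear evolution) to a quantity controlled by Gaussian-type moment bounds, and then convert $L^p(\Omega)$ bounds into exponential tail bounds. The first ingredient I would set up carefully is a Khinchin/Paley–Zygmund-type inequality for the random variables $(\alpha_j,\beta_{n,j},\gamma_{n,j})$: from the hypothesis $\int e^{\gamma x}\,d\theta(x)\le e^{c\gamma^2}$ one gets $\|\sum c_n g_n\|_{L^p(\Omega)}\le C\sqrt{p}\,(\sum |c_n|^2)^{1/2}$ for all $p\ge 2$ (this is exactly the statement recalled from \cite{BTproba}). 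The second, standard ingredient is the elementary lemma: if a nonnegative random variable $X$ satisfies $\|X\|_{L^p}\le C_0\sqrt p\, A$ for all $p\ge p_0$, then $\mathbf p(X>\lambda A)\le C e^{-c\lambda^2}$; more generally if $\|X\|_{L^p}\le C_0 p^{1/r} A$ then $\mathbf p(X>\lambda A)\le Ce^{-c\lambda^r}$. This is what will eventually produce the $e^{-cM^{2\eps_0}}$ form once we take $\lambda\sim M^{\eps_0}$.

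With these two tools in hand, each of the five estimates becomes a computation of an $L^p(\Omega)$ moment of a space(-time) norm, which one does via Minkowski's inequality to exchange $L^p(\Omega)$ with the spatial/temporal norm whenever the latter has exponent $\le p$. For $F_M$ one estimates $\E\|\Pi_M(u_0,u_1)\|_{\mathcal H^1}^p$: since $(u_0,u_1)\in\mathcal H^s$, the deterministic Fourier coefficients obey $\sum_{|n|\le M}\langle n\rangle^{2}(|b_{n}|^2+|c_n|^2)\le M^{2(1-s)}\|(u_0,u_1)\|_{\mathcal H^s}^2$, and the randomization only costs a $\sqrt p$, so $\|\,\|\Pi_M(u_0,u_1)\|_{\mathcal H^1}\,\|_{L^p(\Omega)}\lesssim \sqrt p\, M^{1-s}$, giving the tail with $\lambda=M^{\eps}/M^{\eps_0}$-type scaling and $r=2$. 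For $G_M$, one uses Minkowski to put $L^4_x$ outside $L^p_\omega$ (legitimate for $p\ge 4$) and the Khinchin bound pointwise in $x$, together with $\sum_{|n|\le M}(|b_n|^2+|c_n|^2)\le \|u_0\|_{L^2}^2$; the extra $M^\eps$ room absorbs the $\sqrt p$ after optimizing $p\sim M^{\eps_0}$. For $H_M, K_M, R_M$ the only new feature is the presence of $S(t)$ and the time integration. Here I would use that $S(t)$ acts on the Fourier side by unimodular multipliers $\cos(t|n|), |n|^{-1}\sin(t|n|)$, so $|\widehat{S(t)\Pi^M(u_0,u_1)}(n)|\lesssim \langle n\rangle^{-1}(|b_n|+|c_n|+|n||b_{n,0}|+\dots)$ uniformly in $t$; then Minkowski in $t$ and $x$ (using $L^2_t L^\infty_x$, $L^3_tL^6_x$, $L^\infty_tL^4_x$, each with finite exponent $\le p$) reduces the $L^p(\Omega)$ moment to a deterministic tail sum $\sum_{|n|>M}\langle n\rangle^{-2s}(\cdots)\lesssim M^{-2s}\|(u_0,u_1)\|_{\mathcal H^s}^2$ — this is precisely where the weight $\langle t\rangle^{-\delta}$ with $\delta>1/2$, $\widetilde\delta>1/3$, $\check\delta>0$ is used, to make the $t$-integral converge (for $L^2_t$ one needs $2\delta>1$, for $L^3_t$ one needs $3\widetilde\delta>1$, and for $L^\infty_t$ any $\check\delta>0$ works). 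One also needs, for the $L^\infty_x$ and $L^6_x, L^4_x$ norms of the random function, a Sobolev embedding or a direct Hausdorff–Young/Bernstein estimate to pass from the Fourier-weighted $\ell^2$ bound to the $L^\infty$ (respectively $L^6$, $L^4$) bound — using $d\ge 3$ and losing only $M^\eps$.

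The main obstacle, and the only place requiring genuine care rather than bookkeeping, is the $L^2_tL^\infty_x$ estimate entering $H_M$ (and to a lesser extent $L^3_tL^6_x$ for $K_M$): controlling an $L^\infty_x$ norm of a random Fourier series is not immediate from Khinchin applied pointwise, because one must integrate the tail bound $\mathbf p(|X(x)|>\lambda)$ over $x$ and over $t$, and a crude union bound over frequencies would lose logarithms or powers of $M$ that could swamp the $M^\eps$ budget. The right move is to estimate, for fixed $p$ large, $\|\,\|S(t)\Pi^M(u_0,u_1)\|_{L^\infty_x}\,\|_{L^p_\omega}$ by first dyadically decomposing in frequency, applying Bernstein $\|f_N\|_{L^\infty_x}\lesssim N^{d/q}\|f_N\|_{L^q_x}$ with $q=p$, then Minkowski to bring $L^p_\omega$ inside $L^p_x$, then Khinchin — the net loss is $N^\eps$ per dyadic block when $p$ is chosen $\sim\log N$ or a small power, and the $\langle n\rangle^{-s}$ decay (with $s>0$) sums the blocks for $|n|>M$ to $M^{\eps-s}$. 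This is exactly the Sobolev-type/Nikolskii argument used in \cite{BTproba}, so in the write-up I would simply invoke it; the remaining four estimates are then straightforward variants, and assembling them gives $\mu(E_M^c)\le \sum \mu(\cdot^c_M)\le C e^{-cM^{2\eps_0}}$ as claimed.
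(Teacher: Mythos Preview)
Your overall scheme --- Khinchin/large-deviation bounds for random series, plus Minkowski to commute $L^p_\omega$ past space-time norms of exponent $\le p$ --- is exactly the machinery of \cite{BTproba} on which the paper relies, and your treatment of $F_M, G_M, H_M, K_M$ (including the identification of the $L^\infty_x$ difficulty in $H_M$ and its resolution via dyadic Bernstein) is correct and matches what is done there.

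There is, however, a gap in your handling of $R_M$. You list the time norms in $H_M, K_M, R_M$ as having ``finite exponent $\le p$'' and say that for $L^\infty_t$ ``any $\check\delta>0$ works'' because no $t$-integral needs to converge. But $L^\infty_t$ has infinite exponent, so Minkowski does \emph{not} let you bring $L^p_\omega$ inside $L^\infty_t$; the inequality $\|X\|_{L^p_\omega L^\infty_t}\le \|X\|_{L^\infty_t L^p_\omega}$ goes the wrong way. This is precisely the point the paper singles out as the only novelty over \cite[Lemma~4.2]{BTproba}: one first applies a one-dimensional Sobolev embedding in the time variable,
\[
\|\langle t\rangle^{-\check\delta} S(t)\Pi^M(u_0,u_1)\|_{L^\infty_t L^4_x}
\le C\,\|(1+|D_t|)^\sigma \langle t\rangle^{-\check\delta} S(t)\Pi^M(u_0,u_1)\|_{L^p_t L^4_x},
\]
choosing $p$ large with $\sigma>1/p$ and $\check\delta>1/p$ (so the weighted $L^p_t$ norm is finite), and then uses that $|D_t|$ acting on $S(t)$ is equivalent to $|D_x|$ to convert the $\sigma$ time-derivatives into $\sigma$ spatial derivatives; since $\sigma<s$ this extra loss is absorbed and one is back to a $K_M$-type estimate. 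This is the time-variable analogue of the Bernstein trick you correctly invoke in $x$ for $H_M$, and it is the step your plan is missing.
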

\begin{proof} This result is very close to \cite[Lemma 4.2]{BTproba}. Indeed, the only new point is the bound on the measure of $R_M$, whose proof follows the same lines as the proof of the bound on $K_M$, once we notice that by ($1$-d) Sobolev injection, with $p$ sufficiently large and such that $  \check{\delta}> \frac 1 p$, $\sigma > \frac 1 p$, $\sigma <s$,
\begin{multline}
\qquad \| \<t\>^{-\check{\delta}}  S(t)\Pi^{M}(u_{0},u_{1})\|_{L^\infty(\R; L^4( \T^{d}))} \\
\leq C \| (1+ |D_t|)^ \sigma \<t\>^{-\check{\delta}}  S(t)\Pi^{M}(u_{0},u_{1})\|_{L^p (\R; L^4( \T^{d}))}\\
\leq C' \| \<t\>^{-\check{\delta}}  (1+ |D_t|) ^\sigma S(t)\Pi^{M}(u_{0},u_{1})\|_{L^p (\R; L^4( \T^{d}))}\\
\leq C' \| \<t\>^{-\check{\delta}}  (1+ |D_x|) ^\sigma S(t)\Pi^{M}(u_{0},u_{1})\|_{L^p (\R; L^4( \T^{d}))}.
\end{multline}
 \end{proof}
%As a consequence, since $(S(t),\partial_t S(t))$ is continuous on ${\mathcal H}^s$,  we deduce the following statement.
%
%\begin{coro} For all $T>0$, there exist $C,c>0$ such that for any $M>0$,
%\begin{equation*}
%\mu\Big((u_{0},u_{1}) \;:\;\;\| S(t)\Pi_{M} (u_{0},u_{1})\|_{L^{\infty}_{T}H_{x}^{1}}>M^{1-s+\eps}  \Big)\leq C\e^{-cM^{2\eps}},
%\end{equation*}
%\begin{equation*}
%\mu\Big((u_{0},u_{1}) \;:\;\;\| \partial_{t}S(t)\Pi_{M} (u_{0},u_{1})\|_{L^{\infty}_{T}L_{x}^{2}}>M^{1-s+\eps}  \Big)\leq C\e^{-cM^{2\eps}}.
%\end{equation*}
%\end{coro}
%%%%%%%%%%%

\section{Uniform bounds on  the Sobolev norms, $s>0$}\label{Sect.4}
 For $N\gg1$ we consider the following truncation of \eqref{Wv0}
 \begin{equation}\label{Wv*}
\left\{
\begin{aligned}
&\partial^{2}_{t}u_N- {\mathbf{\Delta}} u_N+ S_{N}\big((S_{N}u_N)^{3}\big)=0, \quad   (t,x)\in \R\times \T^{d},\\
&(u_N,\partial_{t}u_N)(0,\cdot)=  (u_{0},u_{1}) \in \H^{s}.
\end{aligned}
\right.
\end{equation}
In fact, equation \eqref{Wv*} is an ODE in low frequencies, and is the linear wave equation in high frequencies. Indeed, if $K$ is large enough so that $\Pi_K S_N = S_N$, then the equation~\eqref{Wv*} is equivalent to the uncoupled system
 \begin{equation*}
\left\{
\begin{aligned}
&\partial^{2}_{t}\Pi_K u_N- {\mathbf{\Delta}} \Pi_K u_N + S_{N}\big((S_{N}u_N)^{3}\big)=0, \quad   (t,x)\in \R\times \T^{d},\\
&(\Pi_K u_N ,\partial_{t}\Pi_Ku_N)(0,\cdot)=  (\Pi_Ku_{0},\Pi_Ku_{1}) ,\\
& (\text{Id} - \Pi_K) (u_N) = S(t)\big(\,(\text{Id}-\Pi_K)u_{0},(\text{Id}-\Pi_K)u_{1}\,\big). 
\end{aligned}
\right.
\end{equation*}
Then from the conservation of the energy
\begin{equation*}
\mathcal{E}_{N}(\Pi_K(u_N))(t)=\frac12\int_{\T^{d}}\Big((\partial_{t}\Pi_Ku_N)^{2}+|\nabla_{x}\Pi_Ku_N|^{2}+\frac12(S_{N}u_N)^{4}\Big)\text{d}x,
\end{equation*}
we deduce that, for all $N\geq 1$, \eqref{Wv*} admits a global flow $\Phi_{N}(t)$.
The goal of this section is to prove the following statement.
%%%%%%%%%%%%%
\begin{prop}\label{th.3}
Let $0<s<1$ and $\mu\in {\mathcal M}^s$. Then for any $\varepsilon>0$ there exist $C, \delta >0$ such that for every $(v_0, v_1)\in \Sigma$, 
there exists $M>0$ such that the family of global solution $(u_N)_{N\in \mathbb{N}}$ to~\eqref{Wv*} satisfies 
\begin{equation*}\begin{gathered}
 u_N(t)= S(t) \Pi^0(v_0, v_1)+ w_N(t), \\
 \|(w_N(t), \partial_t w_N(t)) \|_{\mathcal{H}^1} \leq C (M^s+ |t|)^{\frac {1-s} s + \varepsilon} ,\\
\|S_N(u_N)\|_{L^4( \T^d)}\leq C (M^s+ |t|)^{\frac {1-s} {2s} + \varepsilon},
\end{gathered}
\end{equation*} 
with 
$ \mu (M>\lambda) \leq C e^{-\lambda^\delta}\,.
$
\end{prop}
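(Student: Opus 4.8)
\emph{Reduction to a deterministic statement.} The probability bound is separated off first. By the large-deviation estimates on $\mu(E_M^{c})$ proved in Section~\ref{Sect.3} and the Borel--Cantelli lemma, the set
\[
\Sigma=\big\{(v_0,v_1)\in\mathcal H^{s}:\ \exists\,M_0\geq1,\ \forall\,M\geq M_0,\ (v_0,v_1)\in E_M\big\}
\]
has full $\mu$-measure, and for $(v_0,v_1)\in\Sigma$ one may take for $M=M(v_0,v_1)$ the least such $M_0$ (rounded up to a power of $2$, and enlarged so that in addition $\|(v_0,v_1)\|_{\mathcal H^{s}}\leq M$ and the Fourier means of $v_0,v_1$ are $\leq M^{\eps}$, none of which costs more in measure). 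Summing the tails of $\mu(E_M^{c})\leq Ce^{-cM^{2\eps_0}}$ gives $\mu(M>\lambda)\leq Ce^{-\lambda^{\delta}}$ for a suitable $\delta>0$. It then suffices to prove the stated estimates, \emph{uniformly in $N$}, for a fixed $(v_0,v_1)\in\Sigma$ with its $M$.

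\emph{Time-dependent truncation and a modified energy.} Fix $(v_0,v_1)\in\Sigma$. Choose dyadic times $T_j=2^{j}\max(M^{s},1)$ and truncation levels $\Lambda_j$ slightly above $T_j^{1/s}$ (so that $\Lambda_j\geq M$, that $\Lambda_j^{1-s}\lesssim(M^{s}+|t|)^{\frac{1-s}{s}+\eps}$ on $[T_{j-1},T_j]$, and that the small gains below win). On the block $[T_{j-1},T_j]$ write $u_N=S(t)\Pi^{\Lambda_j}(v_0,v_1)+\widetilde w_j$, so that, since $S(t)\Pi^{\Lambda_j}(v_0,v_1)$ solves the linear wave equation, $(\partial_t^{2}-\Delta)\widetilde w_j=-S_N\big((S_Nu_N)^{3}\big)$; note $\widetilde w_j$ is a trigonometric polynomial of degree $O(N)$ (the high frequencies of $u_N$ coincide with those of $S(t)\Pi^{\Lambda_j}(v_0,v_1)$), so all quantities below are finite for each fixed $N$ and only their $N$-uniformity is at stake, and at a block boundary $\widetilde w_{j+1}-\widetilde w_j$ is the smooth band $S(t)(\Pi_{\Lambda_{j+1}}-\Pi_{\Lambda_j})(v_0,v_1)$. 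Differentiating
\[
E_j(t)=\tfrac12\|\partial_t\widetilde w_j\|_{L^{2}}^{2}+\tfrac12\|\nabla\widetilde w_j\|_{L^{2}}^{2}+\tfrac14\|S_Nu_N\|_{L^{4}}^{4}
\]
and using the equation one finds $\frac{d}{dt}E_j=\langle(S_Nu_N)^{3},S_N\partial_t f_j\rangle$ with $f_j=S(t)\Pi^{\Lambda_j}(v_0,v_1)$; here $\partial_t f_j$ carries a full derivative on data of regularity only $s<1$. To remove it, expand $(S_Nu_N)^{3}=(S_N\widetilde w_j+S_Nf_j)^{3}$ and integrate by parts in time every monomial containing $S_N\partial_t f_j$; this produces a corrected energy $\mathcal E_j=E_j-(\text{quartic}+\text{cubic}+\text{quadratic corrections in }S_Nf_j)$ whose time derivative is, up to sign, a sum of the three terms $\langle (S_N\widetilde w_j)^{k}\,\partial_t(S_N\widetilde w_j),(S_Nf_j)^{3-k}\rangle$, $k=0,1,2$, with $\partial_t(S_N\widetilde w_j)\in L^{2}$ and no more derivatives on $f_j$.

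\emph{Estimate of the increment and bootstrap over blocks.} On $[T_{j-1},T_j]$ these three terms are bounded by Hölder by $\|\partial_t(S_N\widetilde w_j)\|_{L^{2}}$ times $\|S_Nf_j\|_{L^{6}_{x}}^{3}$, $\|S_N\widetilde w_j\|_{L^{4}_{x}}\|S_Nf_j\|_{L^{8}_{x}}^{2}$, $\|S_N\widetilde w_j\|_{L^{4}_{x}}^{2}\|S_Nf_j\|_{L^{\infty}_{x}}$ respectively. Since $S_N$ is bounded uniformly on $L^{p}$ (Lemma~\ref{lem.sn}) and commutes with $S(t)$ and $\Pi^{\Lambda_j}$, the space--time norms of $S_Nf_j$ over the block are controlled, after pulling a power of $\langle T_j\rangle$ out of the time-weights, by the sets $K_{\Lambda_j}$, $K_{\Lambda_j}\cap H_{\Lambda_j}$ (interpolated, for $L^{8}_{x}$) and $H_{\Lambda_j}$, each gaining the factor $\Lambda_j^{\eps-s}$; with $\Lambda_j$ slightly above $T_j^{1/s}$ this gain beats the powers of $T_j$ from the weights and the block length. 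Bounding $\|\partial_t(S_N\widetilde w_j)\|_{L^{2}}\lesssim E_j^{1/2}$ and $\|S_N\widetilde w_j\|_{L^{4}}\lesssim E_j^{1/4}+(\text{small})$ (using $R_{\Lambda_j}$ for the $f_j$ part, and noting the zero Fourier mode of $\widetilde w_j$ is $\lesssim\|S_Nu_N\|_{L^{4}}$ by Jensen, hence controlled by $E_j$, so $E_j$ dominates $\|(\widetilde w_j,\partial_t\widetilde w_j)\|_{\mathcal H^{1}}^{2}$), integration over the block yields $\sup_{[T_{j-1},T_j]}E_j\leq E_j(T_{j-1})+C T_j^{-\kappa}\sup_{[T_{j-1},T_j]}E_j+(\text{lower order})$ for some $\kappa>0$, the middle term being absorbable once $M$ (hence $T_0$) is large. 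Combined with the block-boundary inequality $E_{j+1}(T_j)\leq(1+\eta)E_j(T_j)+C_{\eta}\Lambda_{j+1}^{2(1-s)+C\eps}$ (from $\|a+b\|^{2}\leq(1+\eta)\|a\|^{2}+C_{\eta}\|b\|^{2}$ and $F_{\Lambda_{j+1}}$) and the initial bound $E_0(0)\lesssim\Lambda_0^{2(1-s)+C\eps}$ (from $F_{\Lambda_0},G_{\Lambda_0},R_{\Lambda_0}$), a geometric-sum induction over the blocks — with $\eta$ fixed small enough (depending on $s$) that $(1+\eta)^{j}$ is dominated by $\Lambda_j^{\eps}$ — gives $E_j(t)\lesssim\Lambda_j^{2(1-s)+C\eps}$ on $[T_{j-1},T_j]$, uniformly in $N$. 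Since $w_N:=u_N-S(t)\Pi^{0}(v_0,v_1)=\widetilde w_j-S(t)\Pi_{\Lambda_j}\Pi^{0}(v_0,v_1)$ on that block and $S(t)$ is bounded on $\mathcal H^{1}$ uniformly in $t$ for mean-zero data, $\|(w_N,\partial_tw_N)\|_{\mathcal H^{1}}\lesssim E_j^{1/2}+\Lambda_j^{1-s+\eps}\lesssim(M^{s}+|t|)^{\frac{1-s}{s}+\eps}$, and $\|S_Nu_N\|_{L^{4}}\leq\|S_N\widetilde w_j\|_{L^{4}}+\|S_NS(t)\Pi^{\Lambda_j}(v_0,v_1)\|_{L^{4}}\lesssim E_j^{1/4}+\langle T_j\rangle^{\check\delta}\Lambda_j^{\eps-s}\lesssim(M^{s}+|t|)^{\frac{1-s}{2s}+\eps}$.

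\emph{Main obstacle.} The delicate point is that the cross-terms in $\frac{d}{dt}\mathcal E_j$ are of super-critical homogeneity and can only be absorbed thanks to the negative power $\Lambda_j^{-s}$ supplied by the probabilistic space--time estimates of Section~\ref{Sect.3}; this forces the truncation to grow essentially like $T_j^{1/s}$, and the budget is tight — one needs the weights $\delta,\widetilde\delta,\check\delta$ arbitrarily close to $\tfrac12,\tfrac13,0$ — so that the resulting energy growth $\Lambda_j^{1-s}$ stays within the asserted $(M^{s}+|t|)^{\frac{1-s}{s}+\eps}$. The other subtlety is bookkeeping: all constants must be kept independent of $N$ and their accumulation through infinitely many time blocks controlled so that no factor $2^{j}$ survives.
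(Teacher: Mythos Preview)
Your argument is correct, but the paper's route is considerably shorter on two counts, and the comparison is instructive.

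\emph{Choice of energy functional.} You take $E_j=\tfrac12\|\partial_t\widetilde w_j\|_{L^2}^2+\tfrac12\|\nabla\widetilde w_j\|_{L^2}^2+\tfrac14\|S_Nu_N\|_{L^4}^4$, which forces you to deal with $\langle(S_Nu_N)^3,S_N\partial_t f_j\rangle$ and then to integrate by parts in time, producing correction terms whose smallness must be checked. The paper instead uses
\[
\mathcal E_N(w_{N,M})=\tfrac12\|\partial_t w_{N,M}\|_{L^2}^2+\tfrac12\|\nabla w_{N,M}\|_{L^2}^2+\tfrac14\|S_Nw_{N,M}\|_{L^4}^4,
\]
with the $L^4$ part built on $S_Nw_{N,M}$ rather than $S_Nu_N$. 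Its time derivative is directly
\[
\int\partial_t w_{N,M}\,S_N\Big((S_Nw_{N,M})^3-(S_Nw_{N,M}+S_Nf_M)^3\Big),
\]
which is immediately bounded by $C\mathcal E_N^{1/2}\big(\|f_M\|_{L^6}^3+\|f_M\|_{L^\infty}\mathcal E_N^{1/2}\big)$ (the cross term $w f^2$ is absorbed into the other two by Young), and a straight Gronwall closes. No corrected energy, no $L^8$ interpolation, no absorption step.

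\emph{No block induction.} You propagate the energy through successive time intervals $[T_{j-1},T_j]$, which creates block-boundary losses that must be summed (your $(1+\eta)^j$ factor). The paper avoids this entirely: since $(v_0,v_1)\in E_K$ for every dyadic $K\geq M$, one may, for each given $t$, apply the Gronwall estimate \emph{on $[0,t]$ from the initial data} with the single truncation level $K$ chosen so that $t\approx K^{s-2\eps}$. The bound $\mathcal E_N^{1/2}(w_{N,K})(t)\leq CK^{1-s+\eps}$ then gives the result in one shot. Your scheme would also allow this simplification: rather than chaining blocks, apply your estimate on $[0,T_j]$ with truncation $\Lambda_j$ directly.

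What your approach buys is a template that would survive in settings where restarting from $t=0$ is not available (e.g.\ if the linear part were not autonomous, or if one only had local-in-time probabilistic bounds). Here, though, the paper's two choices cut the argument roughly in half.
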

%%%%%%%%%%%%%%%%
%%%%%%%%%%%%%%%%%%%%%%%%%%%%%%%%%%%%%%%%%%%%%%%%%%%%%%%%
%%%%%%%%%%%%%%%%%%%%%%%%%%%%%%%%%%%%%%%%%%%%%%%%%%%%%%%%
\begin{proof}
We only give the proof for positive times, the analysis for negative times being analogous.
Fix $\varepsilon>0$ and  $\varepsilon_{1}>0$  such that
\begin{equation}\label{eps1eps0}
\varepsilon < \frac s 2, \qquad \frac{1-s+ \varepsilon} {s- 2\varepsilon}\leq
\frac{1-s} {s}+\varepsilon_1,
\end{equation}
and  fix $\delta>1/2, \widetilde \delta > 1/3 $ such that
\begin{equation}\label{obrat}
 (\delta-\frac 1 2)s<2\delta\varepsilon, \qquad \widetilde \delta <1.
 \end{equation}
We have the following statement.
 %%%%%%%%%
 \begin{lemm}\label{iinntt}
For every $c>0$ there exists $C>0$ such that for every
$t\geq 1$, every integer $M\geq 1$ such that $t\leq cM^{s-2\varepsilon}$, every
$(v_0, v_1)\in E_M$ the solution of \eqref{Wv*} with data $(v_0,v_1)$ satisfies
$$
\|u_N(t)-S(t)\Pi^0(u_{0}, u_{1})\|_{\mathcal{H}^1( \T^d)}\leq CM^{1-s+\varepsilon}.
$$
In particular, thanks to \eqref{eps1eps0}, if $t\approx M^{s-2\varepsilon}$ then
$$
\|u_N(t)-S(t)\Pi^0(u_{0}, u_{1})\|_{\mathcal{H}^1( \T^d)}\lesssim t^{\frac{1-s} {s}+\varepsilon_1}.
$$
\end{lemm}
%%%%%%%%%%%%%
\begin{proof}
For $(v_0, v_1)\in E_M$   we decompose the solution of \eqref{Wv*} with data $(v_0,v_1)$  as
$$
u_N(t)=S(t) \Pi^M(u_{0}, u_{1})+ w_{N,M},
$$
where $w_{N,M}$ solves the problem
\begin{equation*} 
\left\{
\begin{aligned}
&(\partial_t^2-\Delta_{\T^d})w_{N,M}+S_{N}\big((S_{N}w_{N,M}+S_{N}S(t) \Pi^M(u_{0}, u_{1}))^3\big)=0,\\
& (w_{N,M}(0),\partial_t w_{N,M}(0))=\Pi_{M}(v_0,v_1).
\end{aligned}
\right.
\end{equation*}
 Then thanks to an integration by parts and the fact that $S_{N}$ is self adjoint, we get
\begin{multline}\label{derivee}
\frac{\text{d}}{\text{d}t}\mathcal{E}_{N}(w_{N,M})=\\
\begin{aligned}
%&\frac{\text{d}}{\text{d}t}\mathcal{E}_{N}(w_{N,M})\\
&=\int_{\T^{d}}\Big(\partial^{2}_{t}w_{N,M}\partial_{t}w_{N,M}+\nabla_{x}w_{N,M}\cdot \partial_{t}\nabla_{x}w_{N,M}+(S_{N}w_{N,M})^{3}\partial_{t} S_{N}w_{N,M}\Big)\text{d}x\\
&=\int_{\T^{d}}\partial_{t}w_{N,M}\Big(\partial^{2}_{t}w_{N,M}-{\mathbf{\Delta}} w_{N,M}+S_{N}\big((S_{N}w_{N,M})^{3}\big)\Big)\text{d}x\\
&=\int_{\T^{d}}\partial_{t}w_{N,M}\Big(S_{N}\big((S_{N}w_{N,M})^{3}\big)-S_{N}\big((S_{N}S(t)\Pi^{M}(u_{0},u_{1})+S_{N}w_{N,M})^{3}\big)\Big)\text{d}x.
\end{aligned}
\end{multline}
Denote by 
\begin{equation*}
g_{M}(t)=\|    S(t)\Pi^{M}(u_{0},u_{1})\|^{3}_{L^{6}(\T^{d})}\quad \text{and}\quad f_{M}(t)=\|    S(t)\Pi^{M}(u_{0},u_{1})\|_{L^{\infty}(\T^{d})} .\end{equation*}
Therefore from \eqref{derivee} and  the Cauchy-Schwarz inequality, we deduce that  
\begin{multline}
\frac{\text{d}}{\text{d}t}\mathcal{E}_{N}(w_{N,M}) \\
\begin{aligned}
&\leq C \mathcal{E}^{1/2} _{N}(w_{N,M})\|    (S_{N}w_{N,M})^{3}-   \big(S_{N}S(t)\Pi^{M}(u_{0},u_{1})+S_{N}w_{N,M}\big)^{3} \|_{L^{2}(\T^{d})}  \nonumber\\
&\leq C \mathcal{E}^{1/2} _{N}(w_{N,M}) \hfill \big(\|    S(t)\Pi^{M}(u_{0},u_{1})\|^{3}_{L^{6}(\T^{d})}+\|    S(t)\Pi^{M}(u_{0},u_{1})\|_{L^{\infty}(\T^{d})} \|S_{N}w_{N,M}\|_{L^{4}(\T^{d})}^{2}\big)\nonumber
\end{aligned}\\
\leq C \mathcal{E}^{1/2} _{N}(w_{N,M})\Big(g_{M}(t)+f_{M}(t)  \mathcal{E}^{1/2} _{N}(w_{N,M})    \Big),
\end{multline}
and with the Gronwall lemma, we obtain
\begin{eqnarray}
%\begin{aligned}
\mathcal{E}^{1/2}_{N}(w_{N,M})(t)&\leq &C \e^{C\int_{0}^{t}f_{M}(\tau)\text{d}\tau}\Big(\mathcal{E}^{1/2}_{N}(w_{N,M})(0)+\int_{0}^{t}g_{M}(\tau)\text{d}\tau\Big)\nonumber\\
&\leq &C \e^{C\int_{0}^{T}f_{M}(\tau)\text{d}\tau}\Big(\mathcal{E}^{1/2}_{N}(w_{N,M})(0)+\int_{0}^{T}g_{M}(\tau)\text{d}\tau\Big):=\mathcal{G}_{M}(T)\label{gronwall}
%\end{aligned}
\end{eqnarray}
(notice that since $w_{N,M}(0)$ does not depend on $N$, the right-hand side in the last inequality is also independent on $N$). 
We now observe that for $(v_0,v_1)\in E_M$
\begin{equation*}
 \Big|\int_0^t g_M(\tau)d\tau \Big|\leq C M^{3(-s+ \varepsilon)}\langle t\rangle ^{3 \widetilde{\delta}}
 \leq 
 CM^{3(-s+ \varepsilon)+3\widetilde{\delta}(s-2\varepsilon)}\leq C,
 \end{equation*}
 provided
 $$
 -s+\varepsilon+\widetilde{\delta}(s-2\varepsilon)\leq 0.
 $$
 The last condition can be readily satisfied according to \eqref{obrat}.
 \par
 Next, we have (using Cauchy-Schwarz inequality in time) that for $(v_0,v_1) \in E_M$,
 \begin{equation*}
 \Big |\int_0^t f_M(\tau)d\tau \Big|\leq \|\langle \tau \rangle ^{- \delta} f_M\|_{L^2( \mathbb{R})} \langle t \rangle ^{\delta+ \frac 1 2}\leq CM^{-s + \varepsilon} \langle t\rangle^{\delta +\frac 1 2}
 \leq C M^{-s + \varepsilon+(\delta+ \frac 1 2 )(s-2\varepsilon)}\leq C,
 \end{equation*}
 provided $-s + \varepsilon+(\delta+ \frac 1 2) (s-2\varepsilon)\leq 0$, a condition which is satisfied thanks to \eqref{obrat}.
 \par
 
For $(v_0, v_1) \in E_M$, we have  
 $$ {\mathcal E}^{1/2} (w_{N,M}(0))\leq C( \|\Pi_M( u_{0}, u_{1})\|_{{\mathcal H}^1}+\|\Pi_{M}(v_0)\|_{L^4}^2)  \leq CM^{1-s+ \varepsilon},
 $$
 and coming back to \eqref{gronwall}, we get 
 \begin{equation}\label{borne.w}
{\mathcal E}^{1/2}(w_{N,M}(t)) \leq C M^{1-s+ \varepsilon}.
\end{equation}
Recall that
$$ 
u_N(t)= w_{N,M}(t) + S(t)\Pi^M(u_{0}, u_{1})=S(t)\Pi^0(u_{0}, u_{1})+w_{N,M}(t)-S(t)\Pi_M\Pi^0(u_{0}, u_{1}).
$$
We have that for a solution to the linear wave equation  the linear energy 
$$ \|\nabla_x u \|_{L^2( \T^d)}^2 + \|\partial_t u \|_{L^2( \T^d)}^2
$$ is independent of time and that if $(u, \partial_tu) $ is orthogonal to constants ($(u, \partial_t u) =  \Pi^0 (u, \partial_t u)$), then this energy controls the $\mathcal{H}^1( \T^d)$-norm, we deduce for $(v_0,v_1)\in E_{M}\subset F_M$ that
$$
\|S(t)\Pi_{M}\Pi^0(u_{0}, u_{1})\|_{\mathcal{H}^1( \T^d)}\leq CM^{1-s+\varepsilon}
$$
and therefore
$$
\|u_N(t)-S(t)\Pi^0(u_{0}, u_{1})\|_{\mathcal{H}^1( \T^d)}\leq CM^{1-s+\varepsilon}\,.
$$
This completes the proof of Lemma~\ref{iinntt}.
\end{proof}
Next we set
$$
E^{M}=\bigcap_{K\geq M} E_{K},
$$ 
where the intersection is taken over the dyadic values of $K$, {\it i.e.} $K=2^j$ with $j$ an integer.
Thus $\mu(E^M)$ tends to $1$ as $M$ tends to infinity.
Using Lemma~\ref{iinntt}, we obtain that  there exists $C>0$ such that for every $t\geq 1$, every $M$, every $(v_0,v_1)\in E^M$, and every $N\in \N$,
$$
\|u_N(t)-S(t)\Pi^0(u_{0}, u_{1})\|_{\mathcal{H}^1 ( \T^d)}\leq C\big(M^{1-s+\varepsilon}+t^{\frac{1-s} {s}+\varepsilon_1}\big)\,.
$$
Furthermore, by~\eqref{borne.w} and the definition of $R_M$, we get that for $(u_0, u_1) \in E_M$, and $t\leq c M^{s-2 \eps}$ 
\begin{eqnarray*}
 \| S_N (u_N)\|_{L^4( \T^d)}(t)
 &\leq &\| S_N (w_{N,M})\|_{L^4(\T^d)}(t)+ \| S_N (S(t) \Pi^M(u_0, u_1))\|_{L^4( \T^d)}(t)\\
& \leq &\mathcal{E}^{1/4} ( w_{N,M})(t)  + M^{-s+2\epsilon}\leq C M^{\frac{1-s+\epsilon}{2}}.
\end{eqnarray*}
Finally, we set 
 $$
 E= \bigcup_{M= 1}^{\infty} E^M\,.
 $$
 We have thus shown the $\mu$ almost sure bounds on the possible growths of the Sobolev norms of the solutions established in the previous section
 for data in $E$ which is of full $\mu$ measure.  This completes the proof of Proposition~\ref{th.3}.
\end{proof}
\section{Passing to the limit}\label{Sect.5}
%%%%%%%%%%%%%%%%%%%%%%%%%%%%%%%%%%%%%%%%%%%%%%%%%%%%%
\subsection{Some deterministic estimates} We now need an interpolation result. Define the space $W_{T}^{1,\infty}$ by the norm $\|u\|_{W_{T}^{1,\infty}}=\|u\|_{L_{T}^{\infty}}+\|\partial_{t}u\|_{L_{T}^{\infty}}$, and denote by $H^{\s}=H^{\s}(\T^{d})$. 
\begin{lemm}\label{lemm.42}
Let $T>0$,  $-\infty<\s_{2}\leq \s_{1} <+\infty$ and assume that 
$$u\in  L^{\infty}\big([-T,T]; H^{\s_{1}}\big), \qquad \partial_{t}u\in  L^{\infty}\big([-T,T]; H^{\s_{2}}\big).$$ Then for  all $\theta\in (0, 1)$, and all $t_{1},t_{2}\in [-T,T]$
\begin{equation*} 
\|u(t_{1})-u(t_{2})\|_{H^{\theta \s_1 + (1- \theta)\s_2}}\leq C|t_{1}-t_{2}|^{1-\theta}\|u\|^{\theta}_{L^{\infty}_{T}H^{\s_{1}}} \|  u\|^{1-\theta}_{W_{T}^{1,\infty}H^{\s_{2}}}.
\end{equation*}
\end{lemm}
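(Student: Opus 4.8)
The plan is to prove Lemma~\ref{lemm.42} by reducing it to a one-parameter interpolation estimate in the frequency variable and then applying it to the difference $u(t_1)-u(t_2)$. The underlying fact is the elementary inequality for $H^\sigma$ spaces on $\T^d$: if $v \in H^{\s_1}$, then for $\theta \in (0,1)$,
\begin{equation*}
\|v\|_{H^{\theta \s_1 + (1-\theta)\s_2}} \leq \|v\|_{H^{\s_1}}^{\theta}\,\|v\|_{H^{\s_2}}^{1-\theta},
\end{equation*}
which follows directly from Hölder's inequality applied to the Fourier side, since $\|v\|_{H^\sigma}^2 = \sum_n \langle n\rangle^{2\sigma}|\widehat v(n)|^2$ and $\langle n\rangle^{2(\theta\s_1+(1-\theta)\s_2)} = \big(\langle n\rangle^{2\s_1}\big)^\theta \big(\langle n\rangle^{2\s_2}\big)^{1-\theta}$. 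I would state this as a preliminary observation (or simply invoke it as standard).

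Next I would apply this with $v = u(t_1) - u(t_2)$. This gives
\begin{equation*}
\|u(t_1)-u(t_2)\|_{H^{\theta\s_1+(1-\theta)\s_2}} \leq \|u(t_1)-u(t_2)\|_{H^{\s_1}}^{\theta}\,\|u(t_1)-u(t_2)\|_{H^{\s_2}}^{1-\theta}.
\end{equation*}
For the first factor I use the triangle inequality: $\|u(t_1)-u(t_2)\|_{H^{\s_1}} \leq 2\|u\|_{L^\infty_T H^{\s_1}}$. For the second factor I use the fundamental theorem of calculus in time, $u(t_1)-u(t_2) = \int_{t_2}^{t_1}\partial_t u(\tau)\,d\tau$, so that
\begin{equation*}
\|u(t_1)-u(t_2)\|_{H^{\s_2}} \leq |t_1-t_2|\,\|\partial_t u\|_{L^\infty_T H^{\s_2}} \leq |t_1-t_2|\,\|u\|_{W^{1,\infty}_T H^{\s_2}}.
\end{equation*}
Raising the first bound to the power $\theta$, the second to the power $1-\theta$, and multiplying, I obtain
\begin{equation*}
\|u(t_1)-u(t_2)\|_{H^{\theta\s_1+(1-\theta)\s_2}} \leq C\,|t_1-t_2|^{1-\theta}\,\|u\|_{L^\infty_T H^{\s_1}}^{\theta}\,\|u\|_{W^{1,\infty}_T H^{\s_2}}^{1-\theta},
\end{equation*}
which is the claimed estimate with $C = 2^\theta \leq 2$.

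One small technical point to address carefully is whether $\partial_t u$, which is only assumed to lie in $L^\infty([-T,T];H^{\s_2})$, genuinely yields the representation $u(t_1)-u(t_2) = \int_{t_2}^{t_1}\partial_t u(\tau)\,d\tau$ as a Bochner integral with values in $H^{\s_2}$; since $u \in L^\infty([-T,T];H^{\s_1}) \subset L^\infty([-T,T];H^{\s_2})$ and its distributional time derivative lies in $L^\infty([-T,T];H^{\s_2})$, $u$ is (a.e. equal to) an absolutely continuous $H^{\s_2}$-valued function, so the representation holds — this is the only place where a little care is needed, and it is the mildest of obstacles rather than a real difficulty. The estimate is then essentially just Hölder plus the triangle inequality plus the fundamental theorem of calculus.
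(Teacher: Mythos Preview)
Your proof is correct and follows essentially the same approach as the paper: bound the $H^{\s_2}$ norm of the difference by the fundamental theorem of calculus, bound the $H^{\s_1}$ norm by the triangle inequality, and interpolate between the two via H\"older on the Fourier side. The only difference is the order of presentation and your extra remark on the Bochner integral representation, which is a harmless addition.
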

\begin{proof}
By H\"older we get
\begin{equation*} 
\|u(t_{1})-u(t_{2})\|_{H^{\s_{2}}}=\|\int_{t_{1}}^{t_{2}}\partial_{\tau}u(\tau)\text{d}\tau\|_{H^{\s_{2}}}\leq |t_{1}-t_{2}|\|\partial_{t} u\|_{L^{\infty}_{T}H^{\s_{2}}}.
\end{equation*}
Next  we clearly have 
\begin{equation*}
\|u(t_{1})-u(t_{2})\|_{H^{\s_{1}}}\leq 2\| u\|_{L^{\infty}_{T}H^{\s_{1}}},
\end{equation*} and we conclude using that 
$$ \|u\|_{H^{\theta\s_1 + (1- \theta)\s_2}}\leq \| u\|_{H^{\s_1}}^\theta \| u \|_{H^{\s_2}}^{1- \theta}.
$$
\end{proof}
Now for $\s\in \R$ and $\a\in (0,1)$, let us define the space $\mathcal{C}_{T}^{\a}H^{\s}=\mathcal{C}^{\a}\big([-T,T]; H^{\s}(\T^{d})\big)$ by the norm 
\begin{equation*}
\|u\|_{\mathcal{C}_{T}^{\a}H^{\s}}=\sup_{t_{1},t_{2}\in [-T,T], t_{1}\neq t_{2}}\frac{\|u(t_{1})-u(t_{2})\|_{H_{x}^{\s}}}{|t_{1}-t_{2}|^{\a}}+\|u\|_{L^{\infty}_{T}H_{x}^{\s}}.
\end{equation*}
 According to  Ascoli theorem, we obtain
 \begin{lemm}\label{lem.4.2}
 For any $T>0$, any $\a>0$ and any $\epsilon >0$, the embedding
 $$\mathcal{C}_{T}^{\a}H^{\s}\mapsto C((0,T); H^{\s - \epsilon})
 $$ is compact.
 \end{lemm}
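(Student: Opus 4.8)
The plan is to read off the compactness directly from the Arzel\`a--Ascoli theorem for Banach-space valued functions, the only non-elementary input being the compactness of the Sobolev embedding $H^{\s}(\T^{d})\hookrightarrow H^{\s-\epsilon}(\T^{d})$ (Rellich's theorem on the compact manifold $\T^{d}$). Concretely, I would fix a sequence $(u_{k})_{k\in\N}$ with $\|u_{k}\|_{\mathcal{C}_{T}^{\a}H^{\s}}\leq R$ for all $k$ and show that some subsequence converges in $\mathcal{C}([-T,T];H^{\s-\epsilon})$; since this space embeds continuously into $\mathcal{C}((0,T);H^{\s-\epsilon})$, and the uniform H\"older bound makes each $u_{k}$ (and the limit) continuous up to the endpoints, this gives exactly the asserted compactness. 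The two hypotheses to verify are: (i) for each fixed $t$, the set $\{u_{k}(t):k\in\N\}$ is relatively compact in $H^{\s-\epsilon}$; and (ii) the family $(u_{k})$ is uniformly equicontinuous as a family of maps $[-T,T]\to H^{\s-\epsilon}$.

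Point (ii) is immediate from the definition of $\mathcal{C}_{T}^{\a}H^{\s}$: for $t_{1},t_{2}\in[-T,T]$ one has
\[
\|u_{k}(t_{1})-u_{k}(t_{2})\|_{H^{\s-\epsilon}}\leq\|u_{k}(t_{1})-u_{k}(t_{2})\|_{H^{\s}}\leq R\,|t_{1}-t_{2}|^{\a}.
\]
Point (i) follows since $\{u_{k}(t):k\in\N\}$ is bounded by $R$ in $H^{\s}$ and the embedding $H^{\s}(\T^{d})\hookrightarrow H^{\s-\epsilon}(\T^{d})$ is compact; on the torus one may even see this by hand, as for $v$ in the unit ball of $H^{\s}(\T^{d})$ the part supported on modes $|n|>N$ is $O(N^{-\epsilon})$ in $H^{\s-\epsilon}$ uniformly in $v$, while the truncation to $|n|\leq N$ ranges in a bounded subset of a finite-dimensional space.

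Granting (i) and (ii), I would conclude in the standard way: pick a countable dense subset $\{t_{j}\}_{j\in\N}\subset[-T,T]$, extract by a diagonal procedure a subsequence along which $u_{k}(t_{j})$ converges in $H^{\s-\epsilon}$ for every $j$, and then use the equicontinuity estimate above to upgrade pointwise convergence on $\{t_{j}\}$ to a uniform Cauchy property on $[-T,T]$, so that the subsequence converges in $\mathcal{C}([-T,T];H^{\s-\epsilon})$. The only step that is not pure bookkeeping is the compactness of $H^{\s}\hookrightarrow H^{\s-\epsilon}$ invoked in (i); on a compact manifold this is classical, so I expect no genuine obstacle, and the rest is the abstract Arzel\`a--Ascoli argument applied in a Banach-valued setting.
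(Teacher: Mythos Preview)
Your argument is correct and is exactly the approach the paper takes: the paper simply invokes the Ascoli theorem, and you have spelled out the standard Banach-valued Arzel\`a--Ascoli argument with equicontinuity coming from the H\"older bound and pointwise precompactness from the Rellich embedding $H^{\s}(\T^{d})\hookrightarrow H^{\s-\epsilon}(\T^{d})$.
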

 
 \subsection{The compactness argument}
 According to Proposition~\ref{th.3}, we know that almost surely, there exists $M\geq 1$ such that  the family of solutions  to~\eqref{Wv*}
 $$u_N(t)= S(t) \Pi^0(v_0, v_1)+ w_N(t), $$  is such that 
 \begin{equation*}
 \begin{aligned}
  \|(w_N(t), \partial_t w_N(t)) \|_{\mathcal{H}^1(\T^d)} \leq C (M^s+ |t|)^{\frac {1-s} {s} + \varepsilon}\\
\|S_N(u_N)\|_{L^4((0,t)\times \T^d)}\leq C (M^s+ |t|)^{\frac {1-s} {2s} + \varepsilon} |t|^{1/4}.
\end{aligned}
\end{equation*}
We apply Lemma \ref{lemm.42} with $\s_{1}=1$ and $\s_{2}=0$ and we deduce that   the sequence $w_N$ is for any $\epsilon>0$ bounded in $\mathcal{C}_{T}^{\epsilon/2}H^{1-\epsilon/2}$. According to Lemma~\ref{lem.4.2} we can almost surely extract a sequence converging for any $T$ in $\mathcal{C}\big((0,T); H^{1 - \epsilon}\big)$, to a limit that we denote by $w$.
On the other hand, the sequence $S_N(u_N)$ is, for any $T$ bounded in $L^4_{t,x}$ and we can consequently extract a sequence converging weakly in $L^4_{loc, t, x}$ to a limit that we denote by $u$. But for any $K\in \N$,  if $K\leq N-2$, we have 
$$S_K (S_N(u_N)) = S_K(u_N)= S_K( S(t)\Pi^0(v_0, v_1)+ w_N(t)),$$
and we deduce that (in distribution sense), $S_K (S_N(u_N))$ is converging to $S_K(u)$ on the  one hand  and to $S_K \bigl(S(t) (u_0, u_1) + w \bigr)$ on the other hand. Hence 
$$\forall\, K \in \N, \quad S_K (u)=S_K\bigl(S(t) (u_0, u_1) + w \bigr).$$ 
We deduce that (in distribution sense) $u=S(t) (u_0, u_1) + w$. 
Now we deduce that $S_N(u_N)$ is converging weakly in $L^4_{loc,t,x}$ and strongly in $L^2_{loc,t,x}$ to $u$ (here by strong convergence in $L^p_{loc,t,x}$ we mean that the convergence is strong on any compact set). By interpolation, we deduce that $S_N(u_N)$ is converging strongly  to $u$ in $L^p_{loc,t,x}$ for $2\leq p<4$. In particular using this property for $p=3$, we can pass to the limit in~\eqref{Wv*} (here we use Lemma \ref{lem.sn} to pass to the limit in the nonlinear term) and obtain that $u$ satisfies~\eqref{Wv0}.  To prove the convergence of $\partial_{t}w_{N}$ in $ \mathcal{C}\big((0,T);  H^{-\eps}(\T^{d}) \big)$, we estimate 
$$\partial_t^2 w_{N}=\Delta w_{N}-S_{N}\big((S_{N}w_{N}+S_{N}S(t) \Pi^0(u_{0}, u_{1}))^3\big),$$
in $ L^{\infty}\big((0,T);  H^{-\tau}(\T^{d}) \big)$ with $\tau=\max{(d/4,1)}$ (here we use $L^{4/3}(\T^{d})\subset H^{-d/4}(\T^{d})$), and we can conclude thanks to  Lemma \ref{lemm.42} with $\s_{1}=0$ and $\s_{2}=-\tau$.

\end{document}